\documentclass[reqno]{amsart}
\usepackage{amsmath,amssymb,amsthm}

\usepackage[letterpaper,hmargin=1in,vmargin=1.2in]{geometry}

\numberwithin{equation}{section}

\theoremstyle{plain}

\newtheorem*{theorem*}{Theorem}
\newtheorem{lemma}{Lemma}[section]

\theoremstyle{definition}

\theoremstyle{remark}


\newcommand\real{\mathop{\rm Re}}
\newcommand\imag{\mathop{\rm Im}}
\newcommand*{\defeq}{\mathrel{\vcenter{\baselineskip0.5ex \lineskiplimit0pt
                     \hbox{\scriptsize.}\hbox{\scriptsize.}}}%
                     =}

\author{Jacob Shapiro}
\title{Semiclassical resolvent bounds in dimension two}

\begin{document}
\begin{abstract}
We give an elementary proof of weighted resolvent bounds for
semiclassical Schr\"odinger operators in dimension two. We require the potential function to be Lipschitz with long range decay. The resolvent norm grows exponentially in the
inverse semiclassical parameter, but near infinity it grows linearly. Our result covers the missing case from the work of Datchev.
\end{abstract}
\maketitle 
\author

\section{Introduction}

 Let $\Delta \le 0$ be the Laplacian on $\mathbb{R}^2$. We consider semiclassical Schr\"odinger operators of the form 
\begin{equation}\label{semiclass schro}
P = P_h\defeq -h^2 \Delta + V - E, \qquad E,h > 0.
\end{equation}
Assume that $V \in L^\infty(\mathbb{R}^2)$ is real-valued, and that $ \nabla V$, defined in the sense of distributions, also belongs, to $L^\infty(\mathbb{R}^2).$ The Kato-Rellich Theorem shows that the resolvent $(P - i \varepsilon)^{-1}$ is a bounded linear operator $L^2(\mathbb{R}^2) \to H^2(\mathbb{R}^2)$. We establish bounds on the weighted resolvent.

\begin{theorem*}
Suppose that, for some $\delta_0, c > 0$, the following inequalities hold for almost all $x \in \mathbb{R}^2$,
\begin{equation}\label{Vineq}
V(x) \le c (1 + |x|)^{-\delta_0}, \qquad |\nabla V(x) | \le c (1 + |x|)^{-1 - \delta_0}.
\end{equation}
Then, for any $s>1/2$ there are $C, R, h_0>0$ such that
\begin{equation}\label{theorem1}
\left\| (1+|x|)^{-s} (P -i\varepsilon)^{-1} (1+|x|)^{-s}  \right\|_{L^2(\mathbb R^2) \to L^2(\mathbb R^2)}  \le e^{\frac{C}{h}},
\end{equation}
\begin{equation}\label{theorem2}
\left\|(1+|x|)^{-s}  \mathbf{1}_{\ge R} (P-i\varepsilon)^{-1} \mathbf{1}_{\ge R}(1+|x|)^{-s} \right\|_{L^2(\mathbb R^2) \to L^2(\mathbb R^2)} \le \frac{C}{ h},
\end{equation}
 for all $\varepsilon  >0 $ and $h \in (0,h_0]$, where $\mathbf{1}_{\ge R}$ is the characteristic function of $\{x \in \mathbb R^2 \colon |x| \ge R\}$.
\end{theorem*}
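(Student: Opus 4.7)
My approach follows the semiclassical multiplier/Carleman framework, adapted to dimension two. First, I would pass to polar coordinates $x = r\omega$ and absorb the first-order radial derivative by setting $v(r,\omega) = r^{1/2}u(r\omega)$; then $v \in L^2(\mathbb{R}_+ \times S^1;\,dr\,d\omega)$, and the equation $(P-i\varepsilon)u = f$ is equivalent to
\begin{equation*}
L v := -h^2\partial_r^2 v - \frac{h^2\Delta_{S^1}}{r^2} v - \frac{h^2}{4r^2} v + (V-E-i\varepsilon)v = r^{1/2}f.
\end{equation*}
The negative ``Hardy'' term $-h^2/(4r^2)$ is the characteristic two-dimensional feature, and it sits exactly at the critical threshold at which the half-line Hardy inequality can just barely absorb it, since $v(0)=0$ automatically.

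The body of the proof would be a weighted $L^2$ estimate of the form
\begin{equation*}
\int_0^\infty\!\!\int_{S^1} (h\varphi'(r)+h^2)\,e^{2\varphi/h}\,\bigl(|h\partial_r v|^2 + |v|^2\bigr)\,d\omega\,dr \le C\int_0^\infty\!\!\int_{S^1} e^{2\varphi/h}\,|r^{1/2}f|^2\,d\omega\,dr,
\end{equation*}
derived by setting $w = e^{\varphi/h}v$, computing $e^{\varphi/h}Le^{-\varphi/h}$, pairing with a first-order multiplier of the form $a(r)\partial_r w + b(r) w$, and taking real and imaginary parts. The weight $\varphi=\varphi(r)$ must be increasing, with $E - h\varphi''$ remaining positive and $\varphi$ itself uniformly bounded so that $\|e^{\varphi/h}\|_\infty \le e^{C/h}$. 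The decay hypotheses \eqref{Vineq} are spent on the $V$- and $\nabla V$-terms generated by the integration by parts: outside a large ball, $|V|$ and $r|\nabla V|$ are small relative to $E$ and can be absorbed into the positive commutator contributions. The nonnegative angular term $-\Delta_{S^1}$ drops out after integration in $\omega$.

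The two bounds follow from two complementary choices of $\varphi$. For \eqref{theorem1}, take $\varphi$ with derivative of order one on a bounded interval and total variation $O(1)$; the factor $e^{\varphi/h}$ then costs at worst $e^{C/h}$. For \eqref{theorem2}, the cutoff $\mathbf 1_{\ge R}$ on both input and output lets one take $\varphi\equiv 0$ on $\{r \le R\}$ and $\varphi'\sim 1$ on $\{r \ge R\}$, so no exponential prefactor appears; the $h\varphi'$ gain on the left furnishes the linear $1/h$ bound, and the polynomial weight $(1+|x|)^{-s}$ with $s>1/2$ is what makes $\int (1+r)^{-2s+1}\,dr$ converge so that the $L^2$ output can be recovered.

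\textbf{Main obstacle.} The hardest point is the interaction between the Carleman multiplier and the critical term $-h^2/(4r^2)$. In dimensions $n\ge 3$ the analogous radial conjugation leaves a nonnegative centrifugal barrier $(n-1)(n-3)h^2/(4r^2)$, which may simply be kept or dropped; in $n=2$ one must call on the Hardy inequality, and this uses essentially all of the positivity of $\int|h\partial_r v|^2$, leaving no slack for the standard commutator identities near $r\to 0^+$. Bypassing this tight accounting forces $\varphi$ to be adapted to, and flat enough against, the Hardy weight, and it is this careful balancing\,---\,uniform in $\varepsilon>0$, $h\in(0,h_0]$, and the angular data\,---\,that constitutes the technical core and explains why the two-dimensional case required separate treatment from the work of Datchev.
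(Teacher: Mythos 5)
Your setup (polar coordinates, conjugation by $r^{1/2}$, the resulting critical term $-h^2/(4r^2)$, and a Carleman-type estimate with a bounded radial weight $\varphi$) matches the paper's framework, and you have correctly identified the two-dimensional obstruction. But your plan has a genuine gap exactly at the point you flag as the main obstacle, and the Hardy inequality does not close it. The half-line Hardy inequality has the sharp, unattained constant $1/4$, so absorbing $-h^2/(4r^2)$ that way consumes all of $\int |h\partial_r v|^2$; the commutator argument needs that same positivity to absorb the cross term $-2\real\langle P_\varphi u, \partial_r u\rangle$ by Cauchy--Schwarz, so there is nothing left to run the rest of the estimate. The paper avoids Hardy entirely: it introduces a second radial weight $w(r)$ equal to $c_0 r^2$ near $r=0$, so that the dangerous contribution $\frac{h^2}{4r^2}\bigl(w'-\frac{2w}{r}\bigr)$ vanishes identically there (Lemma \ref{wineq}). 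The price is that $w'$ vanishes at the origin, so the Carleman estimate \eqref{Carleman1} degenerates at one point. The paper's essential new idea --- absent from your proposal --- is to remove this degeneracy by translating the origin to a nearby point $x_0$ (Lemma \ref{pickx0}, which is where the decay of the full gradient $\nabla V$, rather than just $\partial_r V$, is used), running the same Carleman estimate centered at $x_0$, and adding the two: the weights $w'(|x|)$ and $w'(|x-x_0|)$ cannot vanish simultaneously, so their sum dominates $m^{-2}$ and yields \eqref{C03}. Without some counterpart to this gluing step your estimate does not control $v$ near the origin and \eqref{theorem1} does not follow.

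A second concrete problem is your choice of $\varphi$ for \eqref{theorem2}: with $\varphi'\sim 1$ on $\{r\ge R\}$ the factor $e^{\varphi/h}\sim e^{r/h}$ is unbounded on the unbounded region $\{|x|\ge R\}$ and cannot be traded against the polynomial weights $(1+|x|)^{-s}$; such a $\varphi$ is also incompatible with the eikonal-type equation $(\varphi')^2 - h\varphi'' = \psi$ with $\psi$ supported in a compact set. In the paper both \eqref{theorem1} and \eqref{theorem2} come from one and the same estimate, with $\varphi'$ compactly supported: the exterior bound carries no exponential factor because $\varphi\equiv\max\varphi$ on $\{|x|\ge R\}$, and the $C/h$ comes from the $C/h^2$ prefactor on the right of \eqref{Carleman1} after taking square roots, not from an $h\varphi'$ gain at infinity.
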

\noindent Burq was the first to prove resolvent bounds of this form in \cite{bu98, bu02}, where he studied Schr\"odinger operators in context of a obstacle problem for the wave equation. He allowed $\Delta$ to be replaced by a Laplace-Beltrami operator with smooth coefficients, and required $V$ to be smooth. Cardoso and Vodev \cite{cavo} extended Burq's result to a wide class of infinite volume Riemannian manifolds. Rodnianski and Tao \cite{rt15} studied Schr\"odinger operators on asymptotically conic manifolds, and obtained resolvent estimates similar to \eqref{theorem1} and \eqref{theorem2}.

In $\mathbb{R}^n$ for $n \ge 3$, Vodev \cite{vod14} studied operators of the same form as in \eqref{semiclass schro}, only $V$ was replaced by $h^\nu V$ for some $\nu >0$, and $V$ was allowed to contain a less regular short range term. Datchev \cite{da} gave an elementary proof of \eqref{theorem1} and \eqref{theorem2} for dimension $n \ge 3$. He only required a decay condition on $\partial_rV$, rather than a decay condition on $\nabla V$. The novel aspect of the Theorem is that \eqref{theorem1} and \eqref{theorem2} are now established in dimension two, when $V$ and $\nabla V$ have low regularity and mild decay.

The $h$ dependence of the resolvent bound in \eqref{theorem1} is well-known to be optimal in general. In particular, in \cite{ddz},  Datchev, Dyatlov, and Zworski established the lower bound 
\begin{equation*}
\left\|(1+|x|)^{-s}  \mathbf{1}_{\ge R} (P-i\varepsilon)^{-1}(1+|x|)^{-s} \right\|_{L^2(\mathbb R^2) \to L^2(\mathbb R^2)} \ge e^\frac{1}{ Ch},
\end{equation*}
for suitable $V$. 
Stronger resolvent bounds are known if $V$ is more regular, and additional assumptions are made about the Hamilton flow $\Phi(t) = \text{exp}t(2 \xi \partial_x - \partial_x V(x) \partial_{\xi}) $. Note that, in our case, $\Phi(t)$ may be undefined, since $\nabla V$ only belongs to $L^\infty( \mathbb{R}^2)$.  For example, if $V$ is  \textit{nontrapping} at the energy $E$, then it is known that \eqref{theorem1} can be improved to
\begin{equation*}
\left\|(1+|x|)^{-s} (P-i\varepsilon)^{-1} (1+|x|)^{-s} \right\|_{L^2(\mathbb R^2) \to L^2(\mathbb R^2)} \le \frac{C}{ h}.
\end{equation*}
For more about resolvent bounds under various dynamical assumptions, see \cite{da}, as well as chapter 6 from \cite{dyzw}, and the references therein. 

Resolvent bounds similar to \eqref{theorem1} and \eqref{theorem2} have proved useful in several applications. Burq used his exponential resolvent bounds in \cite{bu98,bu02} to show logarthimic local energy decay for solutions to the wave equation. As shown in section XIII.7 of \cite{rs78}, the exterior resolvent estimate \eqref{theorem2} is related to exterior smoothing and Strichartz estimates for Schr\"odinger propagators. This is an active research area: see, for instance, \cite{bt07,mmt08} and the papers cited therein. Furthermore, in the recent paper \cite{chr15}, Christiansen used a resolvent bound of the form \eqref{theorem2} to find a lower bound on the resonance counting function on even-dimensional Riemannian manifolds that are flat near infinity and contain a compactly supported perturbation.   

By making $C$ larger and $h_0$ smaller in the Theorem, we can assume without loss of generality that $c = 1/2$. That is, we may assume
\begin{equation}\label{Vineq 1/2}
V(x) \le \frac{1}{2} (1 + |x|)^{-\delta_0}, \qquad |\nabla V(x) | \le \frac{1}{2} (1 + |x|)^{-1 - \delta_0}.
\end{equation}
We may also assume without loss of generality that 
\begin{equation}\label{s delta 0 assumpt}
0 < 2s -1 < \delta_0 < 1/2.
\end{equation}
This is because decreasing $\delta_0$ only weakens the decay on $V$ and $\nabla V$, and increasing $s$ only decreases the weighted resolvent norm. Additionally, to simplify notation, we set $\delta \defeq 2s -1>0$ throughout all of the arguments that follow. 

Our proof hinges on a Carleman estimate similar to those in the papers by Datchev \cite{da} and Cardoso and Vodev \cite{cavo}. The strategy to produce the Carleman estimate for a general dimension $n \ge 3$ is to construct two radial weight functions, $\varphi(r)$ and $w(r)$ that interact favorably with the conjugated operator $r^{-n/2} P r^{n/2}$. This conjugation gives rise to the so-called effective potential term, which takes the form $(n-1)(n-3)(2r)^{-2}$. In dimension $n \ge 3$, the effective potential is positive and decreasing, and can be discarded in the ensuing estimates. But in dimension $n = 2$ only, the effective potential has a negative pole at the origin. The challenge is that $w$ needs to decay sufficiently at the origin to counteract this negative blow-up.  As a result, the Carleman estimate in dimension two comes with a loss at the origin, because $w$ is weak there. But in section  \ref{gluing} we make a resolvent gluing argument that removes the loss and allows us to establish the Theorem. 

The author is grateful to Kiril Datchev for many helpful discussions and suggestions during the writing of this note, and for his support through a research assistantship.

\section{Construction of the weight function}

We use the usual polar coordinates $(r, \theta) \in (0, \infty) \times [0, 2 \pi)$ to denote a point  $(x, y) = (r \cos \theta, r \sin \theta) \in \mathbb{R}^2\setminus\{0\}$. Let $\partial_r V$ denote the radial distributional derivative of $V$. That is,

\begin{equation}\label{partial r V}
\partial_rV(r, \theta) \defeq \partial_{x}V(r, \theta) \cos \theta +  \partial_{y}V(r, \theta) \sin \theta.
\end{equation}
Throughout this section, we need only assume that
\begin{equation}\label{Vrineq}
V \le (1 + r)^{-\delta_0}, \qquad |\partial_rV| \le (1 + r)^{-1 - \delta_0}
\end{equation} 
for almost all $(r, \theta) \in \mathbb{R}^2 \setminus\{0\}$. Note that \eqref{Vineq 1/2} implies \eqref{Vrineq}.

The following two lemmas establish the existence and uniqueness of the radial weight function $\varphi(r)$ that we will use in the Carleman estimate. Lemma \ref{psiconstruct} is due to Datchev \cite[Lemma 2.1]{da}, and it constructs a continuous function $\psi(r)$ that obeys a crucial inequality with $V$, $\partial_rV$, and $E$. Lemma 2.2 is due to Datchev and De Hoop \cite[Proposition 3.1]{ddeh}, and it constructs $\varphi$ as a solution to an ordinary differential equation with right hand side $\psi$.

\begin{lemma}\label{psiconstruct}
For $\delta > 0$ sufficiently small, there exist constants $B, R_0, R_1 >0$ (depending on $\delta$) so that the function

\[
\psi = \psi_\delta(r) \defeq \begin{cases}  \delta_0^{-1} , &r \le R_0, \\  \frac B {1-(1+r)^{-\delta}} - \frac E 4 , & R_0 < r < R_1, \\ 0, &r \ge R_1,\end{cases}
\]
is continuous and satisfies the inequality 
\begin{equation} \label{oldw}-\frac{E}{2} \le \psi - V - (\partial_r V - \psi')\frac{1-(1+r)^{-\delta}}{\delta(1 + r)^{-1-\delta}}
\end{equation}
 for almost all points $(r, \theta) \in \mathbb{R}^2 \setminus \{0\}$.
\end{lemma}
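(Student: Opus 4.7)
The plan is to verify the piecewise inequality \eqref{oldw} directly on each of the three regions, after choosing $B$, $R_0$, $R_1$ carefully. Write $w(r) \defeq (1-(1+r)^{-\delta})/(\delta(1+r)^{-1-\delta})$ for the weight appearing in \eqref{oldw}; the claim is equivalent to $\psi + \psi' w \ge V + \partial_r V\cdot w - E/2$. First, I will fix $R_1$ large (to be determined) and use continuity of $\psi$ to force $B = (E/4)(1-(1+R_1)^{-\delta})$ at $r = R_1$ and $B/(1-(1+R_0)^{-\delta}) - E/4 = \delta_0^{-1}$ at $r = R_0$. These conditions uniquely determine $B$ and $R_0$ and yield $R_0 < R_1$ automatically; moreover, a short calculation shows $(1+R_0)^\delta \to 1 + E\delta_0/4$ as $R_1 \to \infty$, so $R_0$ must grow like $e^{c/\delta}$ for a constant $c = c(E,\delta_0) > 0$ once $R_1$ is taken large.

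The middle-region ansatz is engineered so that $\psi + \psi' w$ collapses to a constant: the calculation $\psi'(r) = -B\delta(1+r)^{-\delta-1}/(1-(1+r)^{-\delta})^2$ gives $\psi' w \equiv -B/(1-(1+r)^{-\delta})$, which exactly cancels the first term of $\psi$ to leave $\psi + \psi' w \equiv -E/4$. The middle-region inequality thus reduces to the $\psi$-free statement $V + \partial_r V\cdot w \le E/4$. Combining \eqref{Vrineq} with the elementary bound $1-(1+r)^{-\delta} \le \delta\ln(1+r)$ gives $V + \partial_r V\cdot w \le (1+r)^{-\delta_0} + (1+r)^{\delta-\delta_0}\ln(1+r)$. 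The second term attains its global maximum $1/(e(\delta_0-\delta))$ at the fixed point $r^\ast$ with $\ln(1+r^\ast) = 1/(\delta_0-\delta)$; once $\delta$ is small enough that $r^\ast < R_0$, both terms on the right are exponentially small in $1/\delta$ throughout $[R_0, R_1]$, and the required bound holds with room to spare.

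The outer region $r \ge R_1$ reduces to $V + \partial_r V\cdot w \le E/2$, which follows from the same estimate since $r \ge R_0$. The inner region $r \le R_0$ reduces to $V + \partial_r V\cdot w \le \delta_0^{-1} + E/2$; using $V \le 1$ and $\partial_r V\cdot w \le 1/(e(\delta_0-\delta))$, the assumption $\delta_0 < 1/2$ (so $\delta_0^{-1} > 2$) absorbs both easily. The principal conceptual step will be spotting the cancellation $\psi + \psi' w \equiv -E/4$, which identifies the middle-region ansatz as the general solution of the first-order linear ODE $y + y'w = -E/4$, with $B$ the free constant available for matching boundary values. Once this observation is in hand, the remaining verification is routine bookkeeping comparing the exponential-in-$1/\delta$ growth of $R_0$ against the polynomial smallness of the potential bounds.
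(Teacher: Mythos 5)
Your proof is correct, and it is essentially the standard construction (the one in Datchev's paper, which this paper cites for Lemma \ref{psiconstruct} rather than reproving): the middle-region ansatz is exactly the solution of $y + y'\,\tfrac{1-(1+r)^{-\delta}}{\delta(1+r)^{-1-\delta}} = -\tfrac{E}{4}$, the constants are fixed by continuity matching, and the inequality on each piece reduces to bounding $V + \partial_r V\cdot\tfrac{1-(1+r)^{-\delta}}{\delta(1+r)^{-1-\delta}}$ via \eqref{Vrineq}, using that $R_0$ grows like $e^{c/\delta}$ while $(1+r)^{\delta-\delta_0}\ln(1+r)$ peaks at a $\delta$-independent radius. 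All the quantitative checks (in particular $1+\tfrac{1}{e(\delta_0-\delta)} \le \delta_0^{-1}+\tfrac{E}{2}$ on the inner region, using $\delta_0 < 1/2$) go through.
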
 

\begin{lemma}\label{phiconstruct}
 For any $h > 0$, there exists a unique solution  $\varphi = \varphi_h(r) \in C^2([0,\infty))$ to the equation 
\begin{equation} \label{phipsi}
(\varphi'(r))^2 - h \varphi''(r) = \psi(r).
\end{equation}
Furthermore, $\varphi ' \ge 0$, and the support of  $\varphi '$ is contained in $[0,R_0]$ and independent of $h$. 
\end{lemma}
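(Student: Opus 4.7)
The plan is to reduce the nonlinear Riccati equation $(\varphi')^2 - h\varphi'' = \psi$ to a linear second-order ODE via the substitution $u := \varphi' = -h\phi'/\phi$ with $\phi > 0$. A short algebraic computation gives $u^2 = h^2(\phi')^2/\phi^2$ and $hu' = -h^2\phi''/\phi + h^2(\phi')^2/\phi^2$, so $u^2 - hu' = h^2\phi''/\phi$, and the original equation becomes the Schr\"odinger-type ODE $h^2\phi'' = \psi\phi$, which has a continuous, nonnegative coefficient.

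I would then construct $\phi$ globally by setting $\phi \equiv 1$ on $[R_1,\infty)$ (where $\psi \equiv 0$) and solving the linear ODE backward on $[0,R_1]$ from the endpoint data $\phi(R_1) = 1$, $\phi'(R_1) = 0$. Classical existence-uniqueness for linear ODEs with continuous coefficients yields a unique $C^2$ solution on $[0,R_1]$, and the matching conditions at $R_1$ make the extension globally $C^2$. Convexity of $\phi$, combined with $\phi'(R_1) = 0$, then gives $\phi \ge 1$ and $\phi' \le 0$ on $[0,R_1]$, so $\phi$ stays strictly positive and the substitution $u = -h\phi'/\phi$ is well-defined. Setting $\varphi(r) := -h\int_0^r \phi'(s)/\phi(s)\,ds$ produces the desired $C^2$ solution with $\varphi' \ge 0$ and $\varphi' \equiv 0$ on $[R_1,\infty)$. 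Since $R_0$ and $R_1$ are determined by $\psi$ alone, the support of $\varphi'$ is independent of $h$.

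For uniqueness, any $C^2$ solution $\tilde\varphi$ with $\tilde u := \tilde\varphi' \ge 0$ and compactly supported derivative must satisfy $\tilde u(r^*) = 0$ for some $r^* \ge R_1$. Applying Picard uniqueness to the first-order ODE $u' = (u^2 - \psi)/h$, whose right-hand side is locally Lipschitz in $u$, backward from $r^*$ forces $\tilde u \equiv \varphi'$, so $\tilde\varphi = \varphi$ up to an irrelevant additive constant (fixed, say, by $\varphi(0) = 0$). The main subtle point I foresee is pinning down the precise support $[0,R_0]$ rather than the larger $[0,R_1]$ that the naive backward construction produces; this likely requires exploiting the explicit form $\psi(r) = B(1-(1+r)^{-\delta})^{-1} - E/4$ on $[R_0,R_1]$ to arrange $\phi'$ to vanish identically there, and is the technical heart of the argument cited from Datchev--De Hoop.
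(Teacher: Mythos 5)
The paper gives no proof of this lemma; it cites Datchev--De Hoop \cite[Proposition 3.1]{ddeh}, and your reduction $\varphi'=-h\phi'/\phi$, which converts the Riccati equation into $h^2\phi''=\psi\phi$ and solves backward from $\phi\equiv 1$ on $[R_1,\infty)$, is precisely the argument used there. The construction, the monotonicity and positivity of $\phi$, and the uniqueness argument via Picard--Lindel\"of for $u'=(u^2-\psi)/h$ run from a zero of $u$ beyond $R_1$ are all sound. Two small caveats: the appeal to ``convexity of $\phi$'' is a priori circular, since $\phi''=\psi\phi/h^2\ge 0$ only where $\phi\ge 0$; a short continuation argument (on the maximal interval ending at $R_1$ where $\phi>0$ one gets $\phi'\le 0$ and hence $\phi\ge 1$, so that interval is all of $[0,R_1]$) closes this. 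And, as you implicitly recognize, uniqueness can only be asserted up to an additive constant (or after normalizing $\varphi(0)=0$) and within the class of solutions with $\varphi'\ge 0$ and compactly supported derivative; your proof correctly works in that class.

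On the support: do not try to force $\supp\varphi'\subseteq[0,R_0]$ --- it is impossible, not merely subtle. On $(R_0,R_1)$ one has $\psi(r)=B(1-(1+r)^{-\delta})^{-1}-E/4>0$, so if $\varphi'$ vanished on any open subinterval there, then $\varphi''$ would vanish as well and \eqref{phipsi} would force $\psi=0$, a contradiction. Your construction yields exactly $\supp\varphi'=\supp\psi=[0,R_1]$, which is what \cite{ddeh} asserts; the ``$[0,R_0]$'' in the statement should read ``$[0,R_1]$''. This discrepancy is harmless for the rest of the paper, which uses only that $\varphi'\ge 0$, that $\max\varphi'$ and $\max\varphi$ are finite, and (in Section \ref{gluing}) that $\varphi$ is constant outside a fixed, $h$-independent compact set --- all of which hold with $R_1$ in place of $R_0$.
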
 
\noindent Note that, because $\varphi'' = (\psi - (\varphi')^2)/h$, it follows that $\varphi'''(r)$ exists for almost all $r \in [0, \infty)$. 

\section{Proof of the Carleman estimate} \label{Carleman section}

We continue to assume that \eqref{Vrineq} holds throughout this section. Before establishing the Carleman estimate, which is Lemma \ref{Carlemanlemma1}, we need to prove a preliminary inequality. Define the function $w(r)$ to be 

\[
w = w_\delta(r) \defeq \begin{cases} c_0r^2 , &r \le R_0, \\   1 - (1 + r)^{-\delta} , & r >  R_0,\end{cases}
\]
where we set $c_0 \defeq (1 - (1 + R_0)^{-\delta})/R_0^2 $ to make $w$ continuous on $[0, \infty)$. Note that

\begin{equation} \label{w/w'}
\frac{w}{w'} = \frac{1-(1+r)^{-\delta}}{\delta(1 + r)^{-1-\delta}}, \qquad r> R_0.
\end{equation}
Therefore, \eqref{oldw} shows that

\begin{equation} \label{neww}
-\frac{E}{2}w' \le (\psi - V)w' + (-\partial_r V + \psi')w, \qquad r > R_0.
\end{equation}
Set 
\begin{equation*}
V_{\varphi} \defeq V - (\varphi')^2 + h \varphi'' - h^2/4r^2.
\end{equation*}
The inequality we need is as follows.

\begin{lemma}\label{wineq}
 If $\delta>0$ is small enough, then there exists $h_1 > 0$ so that
\begin{equation}\label{E/4}
  \partial_r \left(w(r)(E-V_{\varphi}(r, \theta))\right) \ge \frac{E}{4}w'(r),
\end{equation}
for almost all $(r, \theta) \in \mathbb{R}^2\setminus \{0\}$ and any $h \in (0,h_1]$.
\end{lemma}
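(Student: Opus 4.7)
The strategy is to apply the ODE relation $(\varphi')^2 - h\varphi'' = \psi$ from Lemma \ref{phiconstruct} to reduce $V_\varphi$ to the simpler form $V - \psi - h^2/(4r^2)$. Expanding the Leibniz rule,
\begin{equation*}
\partial_r\bigl(w(E - V_\varphi)\bigr) = Ew' + (\psi - V)w' + (-\partial_r V + \psi')w + \partial_r\!\left(\frac{wh^2}{4r^2}\right),
\end{equation*}
and I would verify the desired bound separately on $(R_0, \infty)$ and on $(0, R_0)$, since the weight $w$ is defined piecewise.

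On $(R_0, \infty)$, the identity \eqref{w/w'} is precisely engineered so that multiplying \eqref{oldw} by $w' > 0$ yields \eqref{neww}, which bounds the middle two terms below by $-Ew'/2$. The remaining term equals $(h^2/(4r^3))(rw' - 2w)$, and although $rw' - 2w$ is typically negative for large $r$ (tending to $-2$), it is continuous on $[R_0, \infty)$, while $r^3 w' = \delta r^3(1+r)^{-1-\delta}$ is bounded below by a positive constant on $[R_0, \infty)$ and grows like $\delta r^{2-\delta}$ at infinity. Hence $|rw' - 2w|/(r^3 w')$ is uniformly bounded on $[R_0, \infty)$ by some $C_0 = C_0(\delta, R_0)$. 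Choosing $h_1$ small enough that $h_1^2 C_0 \le E$ absorbs this error into $Ew'/4$ and yields \eqref{E/4}.

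On $(0, R_0)$, the quadratic choice $w = c_0 r^2$ gives an exact cancellation: $wh^2/(4r^2) = c_0 h^2/4$ is constant, so its derivative vanishes and the singular effective potential contributes nothing near the origin. Since $\psi = \delta_0^{-1}$ and $\psi' = 0$ on this interval, \eqref{E/4} reduces to showing
\begin{equation*}
w'\bigl(3E/4 + \delta_0^{-1} - V\bigr) \ge w\,\partial_r V.
\end{equation*}
The case $\partial_r V \le 0$ is immediate, and when $\partial_r V > 0$ I plug in $w = c_0 r^2$, $w' = 2c_0 r$ and use $V \le (1+r)^{-\delta_0} \le 1$ together with $\partial_r V \le (1+r)^{-1-\delta_0}$ to reduce to $2(3E/4 + \delta_0^{-1} - 1) \ge r(1+r)^{-1-\delta_0}$; the right side is at most $1$, and the assumption $\delta_0 < 1/2$ gives $\delta_0^{-1} > 2$, so the inequality holds.

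The main obstacle is conceptual rather than technical: the effective potential $-h^2/(4r^2)$ is dimension-two-specific, and the whole lemma hinges on the fact that the quadratic weight $w = c_0 r^2$ on $[0, R_0]$ exactly neutralizes it. Once that structural choice is in place, the $(0,R_0)$ estimate is essentially a constant inequality driven by $\delta_0^{-1} > 2$, and the $(R_0, \infty)$ estimate becomes a routine bounded-quotient argument that fixes $h_1$.
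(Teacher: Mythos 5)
Your proposal is correct and follows essentially the same route as the paper: the same expansion of $\partial_r(w(E-V_\varphi))$ via \eqref{phipsi}, the same observation that $w=c_0r^2$ makes the effective-potential term vanish on $(0,R_0)$ (your $\partial_r(wh^2/4r^2)=0$ is the paper's $w'-2w/r=0$), and the same use of \eqref{neww} plus a bounded-quotient choice of $h_1$ on $(R_0,\infty)$ (your $|rw'-2w|/(r^3w')$ is exactly the paper's function $4|g|$). The only difference is cosmetic bookkeeping in the $(0,R_0)$ case, where the paper writes $w=\tfrac{r}{2}w'$ and collects everything into a single coefficient of $w'$ rather than splitting on the sign of $\partial_r V$.
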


\begin{proof}[Proof of Lemma \ref{wineq}]

First, we expand $\partial_r(w(E - V_{\varphi}))$, making use of \eqref{phipsi},


\begin{equation*}
 \partial_r(w(E - V_{\varphi}))= (E -V + \psi)w' + (-\partial_rV + \psi')w + \frac{h^2}{4r^2}\left(w' - \frac{2w}{r}\right). 
\end{equation*}
We will now use this expansion to investigate two cases separately: first when $r \in (0, R_0)$, and then when $r \in (R_0, \infty)$. In the case $r \in (0,R_0)$,  $\psi = \delta_0^{-1}$, and hence $\psi' = 0$. We also have, $w' - 2w/r= 0$. Using these facts, along with the bounds on $V$ and $\partial_rV$ from \eqref{Vrineq}, we arrive at the following inequality for  $\partial_r(w(E - V_{\varphi}))$ when $r \in (0, R_0)$.

  \[\begin{split}
 \partial_r(w(E - V_{\varphi})) &=  \left(E + \delta_0^{-1} - V -\frac{ r\partial_rV}{2}\right)w' 
\\& \ge \left(E + \delta_0^{-1} - \frac{1}{(1+r)^{\delta_0}} -\frac{r}{2(1+ r)^{1 + \delta_0}}\right) w'
\\& \ge \left(E + \delta_0^{-1} - \frac{3}{2}\right)w'
\\& \ge  \frac{E}{4}w'.
\end{split}\]
The last inequality follows because $\delta_0 < 1/2$.

It remains to establish \eqref{E/4} in the case where $r \in (R_0, \infty)$. According to \eqref{neww}, we have

\begin{equation*}
(E - V + \psi)w' + (-\partial_rV + \psi')w \ge \frac{E}{2}w', \qquad r > R_0. 
\end{equation*}
And so, to establish \eqref{E/4} when $r > R_0$, it suffices to show that, for $h$ small enough, we can achieve
\begin{equation} \label{factorw'}
 \frac{h^2}{4r^2}\left(w' - \frac{2w}{r}\right) \ge -\frac{E}{4}w', \qquad r > R_0.
\end{equation}
To this end, define $g(r)$ to be the function

\begin{equation*}
g \defeq g_{\delta}(r)
=  \frac{1}{4r^2} \left(1 - \frac{2}{\delta}\frac{(1 + r)^{1 + \delta} - (1 + r)}{r}\right), \qquad r >0.
\end{equation*}
Observe that $g$ is bounded on the interval $[R_0, \infty)$, and that  for $r \in (R_0, \infty)$, we have
  \begin{equation*}
\left(\frac{ h^2}{4r^2}\right)\left(w' - \frac{2w}{r}\right) = h^2g(r)w'.
\end{equation*}
If we set $h_1 = (E/(4\sup_{[R_0, \infty)}|g|))^{\frac{1}{2}} > 0$, then \eqref{factorw'} holds for $h \in (0, h_1]$.

\end{proof}

 Define $m$ to be the function $ m = m_\delta(r) \defeq (1 + r^2)^{(1 + \delta)/4}$. We now establish the Carleman estimate. 

\begin{lemma} \label{Carlemanlemma1}
Let $\delta$, $h_1$, and $\varphi$ and be as in Lemma \ref{wineq}. Set $h_0 \defeq \min\{1, h_1\}$. There is a $C > 0$ such that 

\begin{equation} \label{Carleman1}
\| (w')^{\frac{1}{2}}e^\frac{\varphi}{h} v \|^2_{L^2(\mathbb{R}^2)} \le \frac{C}{h^2} \| me^{\frac{\varphi}{h}}(P - i\varepsilon) v \|^2_{L^2(\mathbb{R}^2)}   + \frac{C \varepsilon}
{h} \| e^{\frac{\varphi}{h}}v\|^2_{L^2(\mathbb{R}^2)}
\end{equation}
for all $v \in C^\infty_0(\mathbb{R}^2)$, $\varepsilon \ge 0$, and $h \in (0, h_0]$. 
\end{lemma}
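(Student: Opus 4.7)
My plan is to adapt the positive-commutator method used by Datchev \cite{da} in higher dimensions to the Liouville-transformed two-dimensional setting. First set $u = r^{1/2}e^{\varphi/h}v$; this converts the Carleman estimate \eqref{Carleman1} into the equivalent form
\[
\int w'|u|^2\, drd\theta \;\le\; \frac{C}{h^2}\int m^2|P_\varphi u|^2\, drd\theta + \frac{C\varepsilon}{h}\int|u|^2\, drd\theta
\]
on $L^2((0,\infty)\times[0,2\pi); drd\theta)$, where $P_\varphi u = -h^2\partial_r^2 u + 2h\varphi'\partial_r u - h^2 r^{-2}\partial_\theta^2 u + (V_\varphi - E - i\varepsilon)u$ is the operator obtained by conjugating $P - i\varepsilon$ with $r^{1/2}e^{\varphi/h}$.

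The main computation is the positive-commutator identity obtained by expanding $2\,\real\langle P_\varphi u, w\partial_r u\rangle$ via termwise integration by parts in $r$ and $\theta$:
\[
2\,\real\langle P_\varphi u, w\partial_r u\rangle = h^2\!\int w'|\partial_r u|^2 + 4h\!\int\varphi' w|\partial_r u|^2 + \tfrac{h^2}{2}\!\int r^{-3}(2w-rw')|\partial_\theta u|^2 + \int\partial_r(w(E-V_\varphi))|u|^2 + 2\varepsilon\,\imag\!\int wu\,\overline{\partial_r u}.
\]
The first two terms are manifestly nonnegative; the angular term is nonnegative because $2w \ge rw'$ on $(0,\infty)$. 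This dimension-two inequality holds with equality on $[0,R_0]$ since $w = c_0r^2$, and on $(R_0,\infty)$ it follows by differentiating $2w - rw' = 2 - (2+\delta)(1+r)^{-\delta} + \delta(1+r)^{-1-\delta}$, which vanishes at $r = 0$ with strictly positive derivative thereafter. The fourth addend is bounded below by $\tfrac{E}{4}\int w'|u|^2$ by Lemma \ref{wineq}, which was engineered so that the singular $-h^2/(4r^2)$ piece of $V_\varphi$ interacts favorably with $w$.

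To close the estimate, I apply Cauchy-Schwarz with the weight $m$ together with the pointwise bound $m(r)^{-2}w(r)^2/w'(r) \le C$ for $r > 0$ (checked by splitting at $R_0$ and examining the asymptotics), which gives $\|m^{-1}w\partial_r u\|^2 \le C\int w'|\partial_r u|^2$. An AM-GM split at parameter of order $h^2$ absorbs this into the $h^2\int w'|\partial_r u|^2$ mass from the positive-commutator identity, paying $Ch^{-2}\|mP_\varphi u\|^2$ on the right. The dissipative remainder $2\varepsilon\,|\imag\int wu\,\overline{\partial_r u}|$ is bounded by $2\varepsilon\|w^{1/2}u\|\cdot \|w^{1/2}\partial_r u\|$; using $w \le 1$ on the first factor and a further AM-GM split at parameter $h$ produces the desired $C\varepsilon h^{-1}\|u\|^2$ on the right, the residual $Ch\varepsilon\int w|\partial_r u|^2$ being absorbed into the positive $|\partial_r u|^2$-mass reserved from the earlier Cauchy-Schwarz step.

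The main obstacle will be the bookkeeping in the simultaneous absorption of the Cauchy-Schwarz residue and the $\varepsilon$-remainder into the available positive radial mass, together with careful handling of the integration-by-parts boundary contributions at $r = 0$ where the Liouville factor $r^{1/2}$ forces $u \to 0$ but $\partial_r u \sim r^{-1/2}$; the latter is standard and can be resolved by approximating $v$ with functions supported away from the origin. The essential algebraic ingredients are the dimension-two inequality $2w \ge rw'$, which makes the angular cross-term harmless, and Lemma \ref{wineq}, which supplies the correct sign for the main $|u|^2$-coefficient despite the singular effective potential.
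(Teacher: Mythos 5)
Your overall skeleton matches the paper's: conjugate by $r^{1/2}e^{\varphi/h}$, pair $P_\varphi u$ against $w\partial_r u$ (integrating the paper's identity for $(wF)'$ is exactly this positive-commutator computation), use $2w\ge rw'$ to discard the angular term and Lemma \ref{wineq} for the main $|u|^2$ coefficient, and absorb the $P_\varphi u$ cross term into the radial mass $h^2\int w'|\partial_r u|^2$ at cost $Ch^{-2}\|mP_\varphi u\|^2$. That part is sound: your verification of $2w\ge rw'$ on $(R_0,\infty)$ is correct, and $m^{-2}w^2/w'$ is indeed bounded.

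The gap is in your treatment of the dissipative term. After the split $2\varepsilon\|w^{1/2}u\|\,\|w^{1/2}\partial_r u\| \le C\varepsilon h^{-1}\|u\|^2 + Ch\varepsilon\int w|\partial_r u|^2$, you propose to absorb $Ch\varepsilon\int w|\partial_r u|^2$ into the reserved positive mass, which away from $\supp\varphi'\subset[0,R_0]$ is only (a fraction of) $h^2\int w'|\partial_r u|^2$. Pointwise absorption requires $C\varepsilon\, w\le c\,h\,w'$, i.e.\ $\varepsilon\lesssim h\,w'/w$; but $w'/w\sim \delta(1+r)^{-1-\delta}\to 0$, so for any fixed $\varepsilon>0$ this fails at large $r$, and no choice of AM--GM parameters repairs it --- the obstruction is the unbounded ratio $w/w'$ at infinity. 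This is precisely where the paper invests its extra effort: it proves \eqref{wuu'} via the separate integration-by-parts estimate \eqref{w^2hu'}, which uses the equation itself (rewriting $-h^2w^2\bar u u''$ in terms of $P_\varphi u$, $\varphi'u'$, $\Lambda u$, and $(V_\varphi-E)u$) to bound $h^{-1}\int w^2|hu'|^2$ by $Ch^{-1}\int w^2|P_\varphi u|^2+Ch^{-1}\int|u|^2$ with the \emph{bounded} weight $w^2$ rather than the decaying weight $w'$; only then does $\varepsilon\le h$ produce the $C\varepsilon h^{-1}\|u\|^2$ term of \eqref{Carleman1}. You need this, or some equivalent $w'$-free control of $\int w^2|\partial_r u|^2$, to close the argument. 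By contrast, the boundary issue at $r=0$ that you flag is genuinely routine ($wF\to 0$ since $w\sim r^2$ while $F=O(r^{-1})$), and the paper dispatches it with Fatou's lemma rather than by cutting off near the origin.
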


\begin{proof}[Proof of Lemma \ref{Carlemanlemma1}]
Let
  \[\begin{split}
P_{\varphi}& \defeq  e^{\frac{\varphi}{h}}r^{\frac{1}{2}}(P - i \varepsilon)r^{-\frac{1}{2}}e^{-\frac{\varphi}{h}}
\\& =  -h^2\partial_r^2 + 2h\varphi' \partial_r + \Lambda + V_{\varphi} -E-i\varepsilon,
\end{split}\]
where
\begin{equation*} 
0 \le \Lambda \defeq -\frac{h^2}{r^2}\Delta_{\mathbb{S}^1},
\end{equation*}
and $\Delta_{\mathbb{S}^1}$ is the spherical Laplacian on the unit circle $\mathbb{S}^1$. 

Next, let $\int_{r, \theta}$ denote the integral over $(0, \infty) \times \mathbb{S}^1$ with respect to $drd\theta$, where $d\theta$ is the usual arclength measure on $\mathbb{S}^1$. Throughout the remainder of the presentation, $C > 0$ will denote a constant depending possibly on $w$, $\varphi, E$, and $\delta$, but not on $u$.  It's precise value will change from line to line, but it will always remain independent of $u$.

To show \eqref{Carleman1} it suffices to prove that 
\begin{equation} \label{intest1}
\int_{r, \theta} \partial_r(w(E- V_{\varphi})) |u|^2 \le \frac{C}{h^2} \int_{r, \theta}\frac{w}{w'} |P_{\varphi}u|^2 + \frac{C\varepsilon}{h} \int_{r, \theta} |u|^2, \qquad u \in e^{\frac{\varphi}{h}}r^{\frac{1}{2}}C^{\infty}_0(\mathbb{R}^2).
\end{equation}  
This is because we can apply \eqref{E/4} along with the fact that $w/w' \le \max\{ 2 / \delta, R_0/2\} m^2$ for all $r \in [0, \infty)$. Additionally, because $w'$ is bounded, we may assume without loss of generality that $\varepsilon \le h$.
Our first step in showing \eqref{intest1} is to define the following functional
\begin{equation*} \label{F}
F(r) \defeq  \| h\partial_r u(r,\theta)\|^2_S - \langle (\Lambda + V_{\varphi}(r,\theta)  - E)u(r,\theta),u(r,\theta)\rangle_S, \qquad r>0.
\end{equation*}
Here, $\| \cdot \|_S$ and $\langle \cdot, \cdot \rangle_S$ denote the norm and inner product on $L^2(\mathbb{S}^1)$, respectively. This functional was used by Cardoso and Vodev \cite{cavo} and by Datchev \cite{da} to prove their own Carleman estimates. 

To condense notation, set $u' \defeq \partial_r u$ and $V_{\varphi}' = \partial_r V_{\varphi}$. We compute the derivative $F$, which exists for almost all $r > 0.$

 \[\begin{split}
F' & =  2 \real \langle h^2u'', u' \rangle_S - 2\real\langle (\Lambda + V_{\varphi} - E)u, u' \rangle_S + 2r^{-1} \langle \Lambda u, u \rangle_S - \langle V_{\varphi}'u, u\rangle_S
\\& = -2\real \langle P_{\varphi} u, u' \rangle_S + 4h\varphi'\| u' \|_S^2 + 2 \varepsilon \imag \langle u , u' \rangle_S + 2r^{-1} \langle \Lambda u, u \rangle - \langle V'_{\varphi} u, u \rangle_S.
\end{split}\]
The calculation of $F'$ is straightforward, but it relies on fact that we can apply the dominated convergence theorem to get,
\begin{equation} \label{diff und int}
\lim_{t \to 0} \int_{\mathbb{S}^1} \frac{V(r + t, \theta)- V(r, \theta)}{t}|u(r, \theta)|^2 d\theta = \int_{\mathbb{S}^1} \partial_r V(r, \theta) |u(r, \theta)|^2 d\theta 
\end{equation}
for almost all $r > 0$. This is a consequence of Fubini's theorem. 
The formula for $F'$ allows us to compute $wF' + w'F$, 
\[\begin{split}
w F' + w' F = &- 2w \real \langle P_{\varphi} u,  u' \rangle_S + \left(4h^{-1} w \varphi' + w' \right)\|h u'\|_S^2  + 2w\varepsilon \imag \langle u, u'\rangle_S \\
& + \left(2wr^{-1} - w'\right) \langle \Lambda u,u\rangle_S + \langle  \left(w(E-V_\varphi)\right)'u,u\rangle_S .
\end{split}\]
If we now use the facts $w \varphi' \ge 0$, $w' > 0$, $\Lambda \ge 0$, $2wr^{-1} - w' \ge 0$, and $-2 \real \langle a,b \rangle + \| b \|^2 \ge -\|a \|^2$, then the preceding inequality implies the following.
\begin{equation} \label{w'F + wF'}
w F' + w' F \ge -  \frac{w^2}{h^2w'} \|P_{\varphi} u\|_S^2 + 2w\varepsilon \imag \langle u, u'\rangle_S +  \langle\left(w(E-V_\varphi)\right)'u,u\rangle_S.
\end{equation}
In addition, Fatou's lemma, along with the fundamental theorem of calculus, show that
\begin{equation} \label{Fatou}
\int_0^\infty (w(r)F(r))'  \le -\liminf_{r \to 0} w(r)F(r) = 0.
\end{equation} 
Integrating \eqref{w'F + wF'} with respect to $dr$ and using \eqref{Fatou}, we arrive at 

\begin{equation} \label{baseeqn}
\int_{r, \theta} (w(E-V_{\varphi}))'|u|^2 \le \frac{1}{h^2} \int_{r, \theta} \frac{w^2}{w'} |P_{\varphi} u|^2 + 2 \varepsilon \int_{r, \theta}w |uu'|.
\end{equation}

We focus on the last term in \eqref{baseeqn}. Our goal is to show 

\begin{equation} \label{wuu'}
2\int_{r, \theta} w |uu'| \le \frac{C}{h} \int_{r, \theta} w^2|P_{\varphi} u|^2 + \frac{C}{h}  \int_{r, \theta} |u|^2. 
\end{equation}
 If we have shown \eqref{wuu'}, we can substitute it into \eqref{baseeqn} to get 

\begin{equation} \label{wuu' 2}
\int_{r, \theta} (w(E - V_{\varphi}))'|u|^2   \le \frac{1}{h^2} \int_{r, \theta} \frac{w^2}{w'} |P_{\varphi} u|^2 + \frac{C\varepsilon}{h} \int_{r, \theta} w^2 |P_{\varphi} u|^2 + \frac{C \varepsilon}{h} \int_{r, \theta} |u|^2,
\end{equation}
If we use the assumptions $\varepsilon \le h$, $h \le 1$, along with the fact $(w^2/w' + w^2) \le (1 + \delta)  w/w'$, we see that \eqref{wuu' 2} implies \eqref{intest1}.

To show \eqref{wuu'}, we first write.

\begin{equation*}
2\int_{r, \theta} w|uu'| \le \frac{1}{h} \int_{r, \theta} |u|^2 + \frac{1}{h} \int_{r, \theta} w^2 |hu'|^2. 
\end{equation*}
We will now show that

\begin{equation} \label{w^2hu'}
\frac{1}{h} \int_{r, \theta} w^2 |hu'|^2 \le \frac{C}{h} \int_{r, \theta} w^2 |P_{\varphi} u|^2 + \frac{C}{h}  \int_{r, \theta} |u|^2, 
\end{equation}
which will complete the proof of the Lemma. To show this, we use integration by parts, along with the facts that $h \le 1$ and $ab \le \gamma a^2/2 + b^2/2 \gamma$ for any $\gamma > 0.$

\[\begin{split} \label{hu'1}
\frac{1}{h} \int_{r, \theta} w^2 |hu'|^2 &= \frac{1}{h} \real \left[ \int_{r, \theta} \bar{u}(-2h^2ww'u') + \int_{r, \theta} \bar{u}(-h^2w^2u'') \right]
\\& \le \frac{(\max w')^2}{\gamma h} \int_{r, \theta} |u|^2 + \frac{\gamma}{h} \int_{r, \theta} w^2 |hu'|^2 + \frac{1}{h} \real \left[ \int_{r, \theta} \bar{u}(-h^2w^2u'') \right].
\end{split}\]
Furthermore, for any $\eta > 0$,
  \[\begin{split} \label{hu'2}
 \frac{1}{h} \real \left[ \int_{r, \theta} \bar{u}(-h^2w^2u'') \right]&= \frac{1}{h} \real \left[ \int_{r, \theta}w^2 \bar{u} (P_{\varphi} - 2h\varphi' \partial_r - \Lambda - V_{\varphi} + E + i \varepsilon )u \right]
\\& \le \frac{1}{h} \int_{r, \theta} w^2|P_{\varphi}u||u| + \frac{2}{h} \int_{r, \theta}w^2 \varphi' |hu'||u| + \frac{1}{h} \int_{r, \theta} w^2 |E- V_{\varphi}||u|^2 
\\& \le  \frac{1}{2h}  \int_{r, \theta} w^2|P_{\varphi}u|^2 + \frac{\eta \max \varphi'}{h} \int_{r, \theta} w^2|hu'|^2 \\&+ \frac{1}{h} \left( \frac{\max (\varphi' w^2)}{\eta} + \max(w^2|E - V_{\varphi}|) + \frac{\max(w^2)}{2} \right)  \int_{r, \theta} |u|^2. 
\end{split}\]
Now, take $\gamma = 1/4, \eta = 1/(4\max \varphi')$. and combine the previous two estimates to get

\begin{equation*}
\frac{1}{h} \int_{r, \theta} w^2 |hu'|^2 \le \frac{1}{2h} \int_{r, \theta} w^2 |P_{\varphi}u|^2 + \frac{C}{h} \int_{r, \theta} |u|^2 + \frac{1}{2h} \int_{r, \theta} w^2 |hu'|^2.
\end{equation*}
If we subtract the last term to the left side of this inequality, and multiply through by 2, we arrive at \eqref{w^2hu'}.

\end{proof} 

\section{Proof of the theorem} \label{gluing}
Set $C_0 = 2 \max \varphi$. Our strategy is to take the Carleman estimate \eqref{Carlemanlemma1} and show that there exist constants $C > 0, R > 0$ so that
\begin{equation} \label{C03}
e^{-\frac{C_0}{h}} \|m^{-1} \mathbf{1}_{\le R} v \|^2 + \|m^{-1} \mathbf{1}_{\ge R} v \|^2 \le \frac{C}{h^2} \|m(P - i \varepsilon)v \|^2 + \frac{C \varepsilon}{h} \|v\|^2,
\end{equation}
for all $v \in C^\infty_0(\mathbb{R}^2)$. Then \eqref{C03} allows us to prove \eqref{theorem1} and \eqref{theorem2} using the same density argument given by Datchev in in \cite{da}, which is independent of dimension. We cannot obtain \eqref{C03} directly from our Carleman estimate \eqref{Carleman1}, because the estimate is weak near the origin. However, the decay assumption on $\nabla V$ from \eqref{Vineq} allows us to make a small shift of coordinates and still maintain \eqref{Vrineq}. We obtain the same Carleman estimate as \eqref{Carleman1} with respect to a new origin. We add the two estimates together and recover \eqref{C03}. This is how we will prove the theorem.   
\begin{lemma}\label{pickx0}
Suppose $V, \nabla V \in L^\infty(\mathbb{R}^2)$ and that $V, \nabla V$ satisfy \eqref{Vineq 1/2}. If
$x_0 \in \mathbb{R}^2$ is chosen so that
\begin{equation}\label{x0}
|x_0| \le 2^{(1+ \delta_0)^{-1}}-1,
\end{equation}
then the functions $V( \cdot - x_0), \partial_rV( \cdot - x_0)$ obey \eqref{Vrineq}.
\end{lemma}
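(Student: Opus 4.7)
The plan is a direct triangle inequality argument. Setting $V_0(x) \defeq V(x - x_0)$, the key elementary fact is
\[
1 + |x| \le (1 + |x_0|)(1 + |x - x_0|),
\]
which follows by expanding the right side and using $|x| \le |x_0| + |x - x_0|$. Raising this to a power $\alpha > 0$ and inverting gives
\[
(1 + |x - x_0|)^{-\alpha} \le (1 + |x_0|)^{\alpha} (1 + |x|)^{-\alpha}.
\]
The crucial observation is that the hypothesis \eqref{x0} is exactly the condition $(1+|x_0|)^{1+\delta_0} \le 2$, so the factor $(1+|x_0|)^{\alpha}$ is at most $2$ for every $\alpha \in (0, 1 + \delta_0]$.

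For the gradient bound, I would note that the radial derivative of $V_0$ with respect to $x$ is $\partial_r V_0(x) = (x/|x|)\cdot \nabla V(x - x_0)$ (in the distributional sense), so $|\partial_r V_0(x)| \le |\nabla V(x - x_0)| \le \tfrac{1}{2}(1+|x-x_0|)^{-1-\delta_0}$ by \eqref{Vineq 1/2}. Applying the preceding elementary bound with $\alpha = 1 + \delta_0$ absorbs the constant into the weight, yielding $|\partial_r V_0(x)| \le (1+|x|)^{-1-\delta_0}$, which is the second half of \eqref{Vrineq}. For the bound on $V_0$ itself, \eqref{Vineq 1/2} gives $V_0(x) \le \tfrac{1}{2}(1+|x-x_0|)^{-\delta_0}$, and applying the elementary bound with $\alpha = \delta_0 \le 1 + \delta_0$ (so that $(1+|x_0|)^{\delta_0} \le 2$) yields $V_0(x) \le (1+|x|)^{-\delta_0}$, completing \eqref{Vrineq}.

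There is no real obstacle; the lemma is an algebraic consequence of the triangle inequality. What deserves emphasis is simply that the exponent $1/(1+\delta_0)$ in the hypothesis on $|x_0|$ is tuned so that a translation of the origin costs at most a factor of two in the more stringent weight $(1+r)^{-1-\delta_0}$, and the factor $1/2$ built into \eqref{Vineq 1/2} exactly absorbs this loss.
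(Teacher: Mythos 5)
Your proposal is correct and follows essentially the same route as the paper: both arguments reduce to the elementary bound $(1+|x|)/(1+|x-x_0|) \le 1+|x_0|$ (your inequality $1+|x| \le (1+|x_0|)(1+|x-x_0|)$ is the same fact), bound the radial derivative by $|\nabla V(\cdot - x_0)|$, and observe that \eqref{x0} is precisely the condition $(1+|x_0|)^{1+\delta_0} \le 2$ needed to absorb the loss into the factor $\tfrac12$ from \eqref{Vineq 1/2}. No gaps.
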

\begin{proof}[Proof of Lemma \ref{pickx0}]
Observe that 
\begin{equation*}
V(x - x_0) \le \frac{1}{2}(1 + |x - x_0|)^{-\delta_0} = \frac{1}{2} \left[ \frac{1 + |x|}{1 + |x - x_0|} \right]^{\delta_0} (1 + |x|)^{-\delta_0},
\end{equation*}
\begin{equation*}
\partial_rV(x - x_0) \le |\nabla V(x - x_0)| \le \frac{1}{2}(1 + |x - x_0|)^{-1 -\delta_0} = \frac{1}{2} \left[ \frac{1 + |x|}{1 + |x - x_0|} \right]^{1 + \delta_0} (1 + |x|)^{-1 -\delta_0}.
\end{equation*}
Because $(1 + |x|)/(1 + |x - x_0|) \le 1 + |x_0|$, it suffices to choose $|x_0|$ small enough so that 
\begin{equation*}
\max \{(1 + |x_0|)^{\delta_0}, (1 + |x_0|)^{1 + \delta_0}\} = (1 + |x_0|)^{1 + \delta_0} \le 2.
\end{equation*}
And this is achieved if we pick $x_0$ to satisfy \eqref{x0}.

\end{proof}

\begin{proof}[Proof of Theorem]
Let $L^2 = L^2(\mathbb{R}^2)$, $H^2 = H^2(\mathbb{R}^2)$, $C_0^\infty(\mathbb{R}^2) = C_0^\infty$. Pick $x_0 \in \mathbb{R}^2$ so that
\begin{equation*}
0 < |x_0| < 2^{(1+ \delta_0)^{-1}}-1.
\end{equation*}
Then $V( \cdot + x_0)$ satisfies the bounds \eqref{Vrineq}, according to Lemma \ref{pickx0}. Therefore, the Carleman estimate \eqref{Carleman1} can be applied to the operator 

\begin{equation*}
P_0 \defeq -h^2 \Delta + V( \cdot + x_0) -E.
\end{equation*}
 We shift coordinates, apply \eqref{Carleman1} with $P_0$ in place of $P$, and then shift back.
\[\begin{split}
\left\| (w'(|\cdot - x_0|))^{\frac{1}{2}} e^{\frac{\varphi(| \cdot - x_0|)}{h}} v \right\|^2_{L^2}   &=\left\| (w')^{\frac{1}{2}} e^{\frac{\varphi}{h}} v ( \cdot + x_0) \right\|^2_{L^2}  \\& \le \frac{C}{h^2} \left\| me^{\frac{\varphi}{h}}(P_0 - i \varepsilon) v( \cdot + x_0) \right\|^2_{L^2} + \frac{C\epsilon}{h} \left\| e^{\frac{\varphi}{h}} v( \cdot + x_0) \right\|^2_{L^2}
\\&=  \frac{C}{h^2} \left\| m(| \cdot - x_0|)e^{\frac{\varphi(| \cdot - x_0|)}{h}}(P - i \varepsilon) v \right\|^2_{L^2}
\\&+ \frac{C\epsilon}{h} \left\| e^{\frac{\varphi(| \cdot - x_0|)}{h}} v \right\|^2_{L^2}
\end{split} \]
Summarizing this estimate in just one line, we have 
\begin{equation} \label{wx0}
\left\| (w'(|\cdot - x_0|))^{\frac{1}{2}} e^{\frac{\varphi(| \cdot - x_0|)}{h}} v \right\|_{L^2} \le  \frac{C}{h^2} \left\| m(| \cdot - x_0|)e^{\frac{\varphi(| \cdot - x_0|)}{h}}(P - i \varepsilon) v \right\|_{L^2} + \frac{C\varepsilon}{h} \left\| e^{\frac{\varphi(| \cdot - x_0|)}{h}} v \right\|_{L^2}.
\end{equation}

To proceed, choose $R >0$ large enough so that $|x| \ge R$ implies that $\varphi(|x|) = \varphi(|x - x_0|) = \max \varphi$. Multiply both \eqref{Carleman1} and \eqref{wx0} through by $e^{-C_0/h}$ to obtain
\begin{equation} \label{C01}
e^{-\frac{C_0}{h}} \|w' \mathbf{1}_{\le R} v \|^2_{L^2} + \|w' \mathbf{1}_{\ge R} v \|^2_{L^2} \le \frac{C}{h^2} \|m(P - i \varepsilon)v \|^2_{L^2} + \frac{C \varepsilon}{h} \|v\|_{L^2}^2, 
\end{equation}
\begin{equation} \label{C02}
e^{-\frac{C_0}{h}} \|w'(| \cdot - x_0|) \mathbf{1}_{\le R} v \|^2_{L^2} + \|w'(| \cdot - x_0 |) \mathbf{1}_{\ge R} v \|_{L^2}^2 \le \frac{C}{h^2} \|m(|x - x_0|)(P - i \varepsilon )v\|_{L^2}^2 + \frac{C \varepsilon}{h} \|v\|_{L^2}^2. 
\end{equation}
Next, note that there exists some constant $K > 0$, depending on $x_0$ and $\delta_0$, so that 
\begin{equation} \label{K}
(m^{-1})^2 \le K ((w')^2 + (w'(|\cdot - x_0|))^2), \qquad  m^2 + (m(|\cdot - x_0|))^2 \le Km^2,
\end{equation}
If we then add \eqref{C01} and \eqref{C02} and apply \eqref{K} to both sides of the inequality, we arrive at
\begin{equation*} 
e^{-\frac{C_0}{h}} \|m^{-1} \mathbf{1}_{\le R} v \|_{L^2}^2 + \|m^{-1} \mathbf{1}_{\ge R} v \|_{L^2}^2 \le \frac{C}{h^2} \|m(P - i \varepsilon)v \|_{L^2}^2 + \frac{C \varepsilon}{h} \|v\|_{L^2}^2,
\end{equation*}
which is \eqref{C03}. 

From this point, we follow reasoning from the proof of the Theorem in \cite{da}. For any $\gamma, \eta >0$, we have 

\[ \begin{split}
2\varepsilon \| v \|^2_{L^2} &= -2 \imag\langle (P - i\varepsilon)v, v \rangle_{L^2} 
\\& \le \gamma^{-1} \| m \mathbf{1}_{\ge R}(P-i \varepsilon)v\|^2_{L^2} + \gamma \| m^{-1} \mathbf{1}_{\ge R}v \|^2_{L^2}\\& + \eta^{-1}\|m \mathbf{1}_{\le R}(P- i \varepsilon)v \|^2_{L^2} 
+ \eta\|m^{-1} \mathbf{1}_{\le R}v\|^2_{L^2}.  
\end{split} \]
Setting $\gamma =h/C$ and $\eta = e^{-2C_0/h} $, we estimate $\varepsilon \| v \|^2_{L^2}$ from above in \eqref{C03} and find that, for $h$ sufficiently small
\begin{equation} \label{C04}
e^{-\frac{C}{h}} \|m^{-1} \mathbf{1}_{\le R} v \|_{L^2}^2 + \|m^{-1} \mathbf{1}_{\ge R} v \|_{L^2}^2 \le e^{\frac{C}{h}} \|m\mathbf{1}_{\le R}(P - i \varepsilon)v \|_{L^2}^2 + \frac{C}{h^2}\|m \mathbf{1}_{\ge R}(P - i \varepsilon)v \|_{L^2}^2.
\end{equation}

The final task is to use \eqref{C04} to show that for any $f \in L^2$,
\begin{equation} \label{prethm}
\begin{split}
e^{-\frac{C}{h}}\|\mathbf{1}_{\le R}(P - i\varepsilon)^{-1} m^{-1} f \|^2_{L^2} &+ \|m^{-1} \mathbf{1}_{R \ge}(P- i\varepsilon)^{-1}m^{-1} f \|^2_{L^2} \\ & \le e^{\frac{C}{h}} \|\mathbf{1}_{\le R}f \|_{L^2}^2 + \frac{C}{h^2} \|\mathbf{1}_{\ge R}f \|^2_{L^2}, 
\end{split}
\end{equation}
from which \eqref{theorem1} and \eqref{theorem2} follow. To establish \eqref{prethm}, we prove a simple estimate and then apply a density argument which relies on \eqref{C04}. 

In what follows, we use $a \lesssim b$ to denote $a \le C_{\varepsilon,h}b$ for $C_{\varepsilon,h}$ depending on $\varepsilon$ and $h$, but not on $v$. By the Kato-Rellich Theorem, $(P - i \varepsilon)^{-1}: L^2 \to H^2$ is bounded. In addition, the commutator $[P,m] = -h^2 \Delta m + 2 h^2 \nabla m \cdot \nabla : H^2 \to L^2$ is bounded. So for $v \in H^2$ such that $mv \in H^2$, we have
\[ \begin{split}
\|m(P - i \varepsilon)v\|_{L^2} & \lesssim \|(P - i \varepsilon)m v \|_{L^2} +  \|[P,m]v \|_{L^2}
\\& \lesssim \| mv \|_{H^2} + \|v \|_{H^2}
\\& \lesssim  \| mv \|_{H^2}.
\end{split} \]
Thus we have shown 
\begin{equation} \label{Ceph}
\|m(P-i\varepsilon)v\|_{L^2} \le C_{\varepsilon,h} \|mv\|_{H^2}, \qquad \text{$v \in H^2$ such that $mv \in H^2$}. 
\end{equation}

For fixed $f \in L^2$, the function $m(P-i\varepsilon)^{-1}m^{-1} f \in H^2$ because 
\[ \begin{split}
m(P-i\varepsilon)^{-1}m^{-1} f &= (P - i\varepsilon)^{-1} f + [m, (P-i\varepsilon)^{-1}] m^{-1}f  
\\& =  (P - i\varepsilon)^{-1} f + (P -i\varepsilon)^{-1} [P,m]  (P -i\varepsilon)^{-1} m^{-1}f.
\end{split} \]
Now, choose a sequence $v_k \in C_0^\infty$ such that $ v_k \to  m(P-i\varepsilon)^{-1}m^{-1} f$ in $H^2$. Define $\tilde{v}_k \defeq m^{-1}v_k$. Then, as $k \to \infty$
\begin{equation*}
\| m^{-1} \tilde{v}_k - m^{-1} (P- i \varepsilon)^{-1}m^{-1}f \|_{L^2} \le \| v_k - m (P- i \varepsilon)^{-1}m^{-1}f \|_{H^2} \to 0.
\end{equation*}
Also, applying \eqref{Ceph}
\begin{equation*}
\|m(P- i \varepsilon)\tilde v_k - f\|_{L^2} \lesssim \|v_k - m (P- i \varepsilon)^{-1} m^{-1} f \|_{H^2} \to 0.
\end{equation*} 
We then achieve \eqref{prethm} by replacing $v$ by $\tilde{v_k}$ in \eqref{C04} and sending $k \to \infty$.
\end{proof}

\end{document}